\DeclareMathOperator{\Ker}{Ker}
\DeclareMathOperator{\Rg}{Rg}
\DeclareMathOperator{\rank}{rank}
\DeclareMathOperator{\id}{id}
\renewcommand{\null}{\text{null}}
\newtheorem{theorem}{Theorem}
\theoremstyle{definition}
\newtheorem{example}{Example}
\title{An Algorithmic Approach to Solving B = BCX + YAB Using Quotient Spaces}
\author{
  Alex Taylor\\
  Department of Mathematical Sciences\\
  University of Texas at Dallas,\\ 800 W Campbell Rd, Richardson, TX, 75080\\
  \texttt{Alex.Taylor@utdallas.edu}
}
\begin{document}
\large
\maketitle

\begin{abstract}
One well-known necessary and sufficient condition for equality in the Frobenius rank inequality due to Tian and Styan [1] is that the matrix equation $B = BCX + YAB$ be solvable for $X$ and $Y$. We develop an algorithm to construct the matrices $X$ and $Y$ using a quotient space formulation of the Frobenius rank inequality, and provide several necessary and sufficient conditions for solvability. \footnote{Some of the work on the present paper was carried out during the IMPACT summer camp at The University of Texas at Dallas. We gratefully acknowledge funding and support from the NSF.}
\end{abstract}

\section{Introduction}
Denote by $\mathbb{M}_{m,n}$ the space of $m \times n$ matrices over a field $\mathbb{F}$. The Frobenius rank inequality states that for $A \in \mathbb{M}_{m,n}, B \in \mathbb{M}_{n,p},$ and $ C \in \mathbb{M}_{p,q}$,
\begin{equation}
\rank(ABC) + \rank(B) \geq \rank(AB) + \rank(BC).
\end{equation}
A known proof of this fact utilizes quotient spaces. Consider the linear map
\[ T: \Rg(B)/\Rg(BC) \to \Rg(AB)/\Rg(ABC), \]
defined by $T([x]) = [Ax]$. Since $T$ is surjective, we have
\[ \dim(\Rg(B)/\Rg(BC)) \geq \dim(\Rg(AB)/\Rg(ABC), \]
hence
\[ \rank(B) - \rank(BC) \geq \rank(AB) - \rank(ABC), \]
yielding the inequality in (1). It was proven by Tian and Styan in [1] that equality in (1) occurs if and only if there exist matrices $X$ and $Y$ such that
\begin{equation}
B = BCX + YAB.
\end{equation}
It is clear that equality also occurs precisely when $T$ is injective (i.e., an isomorphism). In the following section we establish the equivalence of this fact to several conditions and construct matrices $X$ and $Y$ so that (2) holds.

\section{Equality In The Frobenius Rank Inequality}
\label{sec:headings}

For reference we list some basic facts and notations regarding quotient spaces. In this paper we denote by $V/W = \{ [v] : v \in V \}$ the vector space of equivalence classes induced by a subspace $W \subset V$ with the natural operations $[v] + [w] = [v + w]$ and $c[v] = [cv]$ for all $v,w \in V$ and $c \in \mathbb{F}$.

\begin{theorem}
Let $W$ be a subspace of $V$. If $\{w_{1}, \ldots, w_{m}\}$ is a basis for $W$ and $\{w_{1}, \ldots, w_{m}, v_{m+1}, \ldots, v_{n}\}$ is an extension to a basis for $V$, then $\{[v_{m+1}],\ldots, [v_{n}]\}$ is a basis for $V/W$. In particular, we have
\[ \dim(V/W) = \dim V - \dim W. \]
\end{theorem}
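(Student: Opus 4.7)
The plan is to verify the two defining properties of a basis for $\{[v_{m+1}], \ldots, [v_n]\}$ in $V/W$, namely spanning and linear independence, and then read off the dimension statement.

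First I would prove spanning. Given any $[v] \in V/W$, write $v$ in the basis $\{w_1, \ldots, w_m, v_{m+1}, \ldots, v_n\}$ of $V$ as $v = \sum_{i=1}^{m} a_i w_i + \sum_{j=m+1}^{n} b_j v_j$. Applying the quotient map and using that $[w_i] = 0$ for each $i$ (since $w_i \in W$), I get $[v] = \sum_{j=m+1}^{n} b_j [v_j]$, so the $[v_j]$ span $V/W$.

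Next I would establish linear independence. Suppose $\sum_{j=m+1}^{n} c_j [v_j] = 0$ in $V/W$. Then $\sum_{j=m+1}^{n} c_j v_j \in W$, so it can be expressed as $\sum_{i=1}^{m} d_i w_i$ for some scalars $d_i$. Rearranging gives $\sum_{i=1}^{m}(-d_i) w_i + \sum_{j=m+1}^{n} c_j v_j = 0$, and since $\{w_1, \ldots, w_m, v_{m+1}, \ldots, v_n\}$ is a basis of $V$, all scalars vanish; in particular every $c_j = 0$.

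Having shown $\{[v_{m+1}], \ldots, [v_n]\}$ is a basis of $V/W$, the dimension formula $\dim(V/W) = n - m = \dim V - \dim W$ is immediate by counting basis elements. I do not anticipate a real obstacle here: the only subtle step is the linear independence argument, where one must be careful to move the representative of the relation back into $V$ and exploit the fact that the given basis of $V$ was built by extending a basis of $W$; this is what forces the coefficients $c_j$ to vanish.
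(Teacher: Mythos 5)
Your proof is correct and complete: the spanning argument correctly uses that $[w_i]=0$ for $w_i\in W$, and the linear independence argument correctly pulls the relation back to $V$ and exploits the fact that the basis of $V$ extends a basis of $W$. The paper states this result without proof (it is listed as a standard reference fact), and your argument is exactly the standard one that would be supplied.
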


Let $A, B, $ and $C$ be matrices as in (1), and let $L_{A}: \Rg(B) \to \Rg(AB)$ be the linear transformation defined by $L_{A}(x) = Ax$. Clearly the aforementioned linear map $T$ is a quotient map induced by $L_{A}$. Let $\mathcal{B}_{1}$ and $\mathcal{B}_{2}$ be bases for $\Rg(B)$ and $\Rg(AB)$ of size $n_{1}$ and $n_{2}$ respectively, that extend bases for $\Rg(BC)$ and $\Rg(ABC)$ of size $m_{1}$ and $m_{2}$, respectively. Then the lower right $(n_{1}-m_{1})\times (n_{2}-m_{2})$ block of the matrix representation of $L_{A}$ with respect to $\mathcal{B}_{1}$ and $\mathcal{B}_{2}$ is precisely the matrix representation of $T$ with respect to the bases for $\Rg(B)/\Rg(BC)$ and $\Rg(AB)/\Rg(ABC)$ given by Theorem 1. This observation gives us the following necessary and sufficient condition for equality (2).

\begin{theorem}
Let $\rank(B) = n_{1}$, $\rank(AB) = n_{2}$, $\rank(BC) = m_{1}$ and $\rank(ABC) = m_{2}$. Then equality holds in (1) if and only if the lower right $(n_{1}-m_{1})\times (n_{2}-m_{2})$ block of the matrix representation for $L_{A}$ is invertible.
\end{theorem}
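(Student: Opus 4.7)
The plan is to assemble two observations, one already in the excerpt and one that is a standard fact from linear algebra, and match them up.

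First I would note that the argument preceding (1) shows $T: \Rg(B)/\Rg(BC) \to \Rg(AB)/\Rg(ABC)$ is always surjective. Hence equality in (1) is equivalent to $\dim(\Rg(B)/\Rg(BC)) = \dim(\Rg(AB)/\Rg(ABC))$, which, given surjectivity, is equivalent to $T$ being an isomorphism. In particular, equality forces $n_{1}-m_{1} = n_{2}-m_{2}$, so the lower right block in question must be square, which is a prerequisite for asking whether it is invertible.

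Next I would verify the block structure asserted informally in the paragraph before the theorem. Write $\mathcal{B}_{1} = \{b_{1},\ldots,b_{m_{1}},b_{m_{1}+1},\ldots,b_{n_{1}}\}$ with $\{b_{1},\ldots,b_{m_{1}}\}$ a basis of $\Rg(BC)$, and similarly $\mathcal{B}_{2} = \{c_{1},\ldots,c_{m_{2}},c_{m_{2}+1},\ldots,c_{n_{2}}\}$ with $\{c_{1},\ldots,c_{m_{2}}\}$ a basis of $\Rg(ABC)$. For $j \leq m_{1}$ we have $b_{j} \in \Rg(BC)$, hence $L_{A}(b_{j}) = Ab_{j} \in \Rg(ABC) = \mathrm{span}\{c_{1},\ldots,c_{m_{2}}\}$, so the coefficient of $c_{i}$ in $Ab_{j}$ vanishes for $i > m_{2}$. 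This produces the zero block below and to the left in the matrix of $L_{A}$ and allows one to read off the lower right block $M$. By Theorem 1, $\{[b_{m_{1}+1}],\ldots,[b_{n_{1}}]\}$ and $\{[c_{m_{2}+1}],\ldots,[c_{n_{2}}]\}$ are bases of the two quotients, and modulo $\Rg(ABC)$ only the coefficients on $c_{m_{2}+1},\ldots,c_{n_{2}}$ survive; thus $M$ is precisely the matrix of $T$ in these bases.

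Finally I would invoke the standard fact that a linear map between finite-dimensional spaces is an isomorphism if and only if its matrix representation (with respect to any choice of bases) is invertible. Combining this with the first step gives the claimed equivalence.

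The main obstacle is really just the bookkeeping in the second step: checking that the author's convention for "lower right block" lines up with the quotient bases supplied by Theorem 1, and in particular that the block is square exactly when it is legitimate to call it invertible. Once this is pinned down, the theorem reduces to the tautology "$T$ is an isomorphism iff its matrix is invertible."
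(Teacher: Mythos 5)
Your proof is correct and takes essentially the same route as the paper, which likewise identifies the lower-right block of the matrix of $L_{A}$ with the matrix of the quotient map $T$ (via Theorem 1) and uses that equality in (1) holds precisely when the surjective map $T$ is an isomorphism. You merely supply details the paper leaves implicit, namely the vanishing of the lower-left block and the observation that invertibility of the block forces it to be square.
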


\begin{theorem}
Let $A,B$ be matrices of appropriate sizes. Then
\[
\rank(AB) = \rank(B) - \dim(\Rg(B)\cap \Ker(A)).
\]
\end{theorem}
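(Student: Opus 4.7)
The plan is to apply the rank--nullity theorem to the restriction $L_A : \Rg(B) \to \Rg(AB)$ that was already introduced earlier in the section. Everything will follow once I identify the kernel and verify surjectivity.

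First I would observe that $L_A$ is well-defined and surjective: any element of $\Rg(AB)$ has the form $ABv$ for some $v$, and $Bv \in \Rg(B)$ satisfies $L_A(Bv) = ABv$. Hence $\Rg(L_A) = \Rg(AB)$ and $\dim \Rg(L_A) = \rank(AB)$.

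Next I would compute the kernel. By definition,
\[
\Ker(L_A) = \{x \in \Rg(B) : Ax = 0\} = \Rg(B) \cap \Ker(A),
\]
which is immediate from unpacking the two conditions ``$x \in \Rg(B)$'' and ``$x \in \Ker(A)$.''

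Finally, the rank--nullity theorem applied to $L_A$ gives
\[
\dim \Rg(B) = \dim \Ker(L_A) + \dim \Rg(L_A),
\]
and substituting the two identifications above yields $\rank(B) = \dim(\Rg(B) \cap \Ker(A)) + \rank(AB)$, which rearranges to the desired formula. There is no real obstacle here; the only thing to be careful about is verifying that $L_A$ genuinely maps into $\Rg(AB)$ and is surjective onto it, so that its image dimension is exactly $\rank(AB)$ rather than some a priori smaller quantity.
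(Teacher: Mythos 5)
Your proof is correct and is essentially the paper's argument in different clothing: the paper applies the first isomorphism theorem to the map $x \mapsto Ax$ on $\Rg(B)$ (via the isomorphism $\varphi:\Rg(B)/\Rg(B)\cap\Ker(A)\to\Rg(AB)$ and the quotient dimension formula of Theorem 1), while you apply rank--nullity to the same map $L_A$, which is the identical dimension count. Your explicit verification of surjectivity and of $\Ker(L_A)=\Rg(B)\cap\Ker(A)$ just spells out what the paper compresses into ``clearly $\varphi$ is an isomorphism.''
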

\begin{proof}
Consider the linear map
\[
\varphi: \Rg(B)/ \Rg(B)\cap\Ker(A) \to \Rg(AB)
\]
defined by $\varphi([Bx]) = ABx$. Clearly $\varphi$ is an isomorphism, so we have
\begin{eqnarray*} 
\rank(AB) &=& \dim(\Rg(B)/ \Rg(B)\cap\Ker(A)) \\
&=& \rank(B) - \dim(\Rg(B)\cap\Ker(A))
\end{eqnarray*}
as desired.
\end{proof}

The following necessary and sufficient condition for equality in the Frobenius rank inequality (1) follows directly from Theorem 3.
\begin{theorem}
\[ \rank(ABC) + \rank(B) = \rank(AB) + \rank(BC) \]
if and only if
\[ \Rg(B)\cap \Ker(A) = \Rg(BC)\cap \Ker(A). \]
\end{theorem}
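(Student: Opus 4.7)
The plan is to apply Theorem 3 twice, once to the product $AB$ and once to the product $A(BC)=ABC$, and then rearrange the resulting identities. Specifically, Theorem 3 gives
\[
\rank(AB) = \rank(B) - \dim(\Rg(B)\cap\Ker(A)),
\]
\[
\rank(ABC) = \rank(BC) - \dim(\Rg(BC)\cap\Ker(A)),
\]
so substituting both into the Frobenius equality $\rank(ABC)+\rank(B)=\rank(AB)+\rank(BC)$ and cancelling the $\rank(B)$ and $\rank(BC)$ terms reduces the problem to showing that the equality is equivalent to
\[
\dim(\Rg(B)\cap\Ker(A)) = \dim(\Rg(BC)\cap\Ker(A)).
\]

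To upgrade this dimension equality to the desired set equality, I would use the obvious containment $\Rg(BC)\subseteq\Rg(B)$, which yields
\[
\Rg(BC)\cap\Ker(A)\subseteq\Rg(B)\cap\Ker(A).
\]
Since both sides are finite-dimensional subspaces and one is contained in the other, equality of dimensions forces equality of subspaces. Conversely, if the two intersections are equal as subspaces, their dimensions agree and the chain of substitutions runs in reverse to recover the Frobenius equality.

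There is essentially no main obstacle: the argument is a two-line algebraic manipulation once Theorem 3 is in hand. The only subtle point worth stating explicitly is the containment $\Rg(BC)\cap\Ker(A)\subseteq\Rg(B)\cap\Ker(A)$, which is what lets us pass from ``equal dimensions'' to ``equal sets''; without noting this, one only obtains the weaker statement that the dimensions coincide.
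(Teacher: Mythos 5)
Your proposal is correct and follows essentially the same route as the paper: both apply Theorem 3 to the pairs $(A,B)$ and $(A,BC)$ to convert the Frobenius equality into an equality of the dimensions of $\Rg(B)\cap\Ker(A)$ and $\Rg(BC)\cap\Ker(A)$, and both use the containment $\Rg(BC)\cap\Ker(A)\subseteq\Rg(B)\cap\Ker(A)$ to pass from equal dimensions to equal subspaces. Your explicit flagging of that containment as the key step is, if anything, slightly more careful than the paper's write-up (which also cites Theorem 1 where Theorem 3 is the result actually being used).
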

\begin{proof}
Notice that, since $\Rg(BC)\cap \Ker(A) \subseteq \Rg(B)\cap \Ker(A)$, we have
\[
\Rg(BC)\cap \Ker(A) = \Rg(B)\cap \Ker(A)
\]
if and only if
\[
\dim(\Rg(BC)\cap \Ker(A)) = \dim(\Rg(B)\cap \Ker(A)),
\]
if and only if, by Theorem 1,
\[
\rank(BC) - \rank(ABC) = \rank(B) - \rank(AB),
\]
as desired.
\end{proof}

We will use the following notation in the next two results. Given a matrix $B$ with rank $r$, define $D_{B}$ to be a matrix whose columns are some $r$ linearly independent columns of $B$. Now if $\{v_{1},\ldots,v_{s}\}$ is a basis for $\Ker(AD_{B})$, then $\mathcal{B} = \{D_{B}v_{1}, D_{B}v_{2}, \ldots, D_{B}v_{s}\}$ is a basis for $\Rg(B)\cap \Ker(A)$. To prove this, it suffices to show that $\mathcal{B}$ is linearly independent and that $\dim (\Rg(B) \cap \Ker(A)) = s.$ If $V_{B}$ is the matrix whose columns are $\{v_{1}, \ldots, v_{s} \}$ then the vectors of $\mathcal{B}$ are precisely the columns of $D_{B}V_{B}$, which has full column rank because $D_{B}$ and $V_{B}$ have full column rank. Now  $\Rg(B) = \Rg(D_{B})$, so we have
\begin{eqnarray*}
\dim(\Rg(B)\cap \Ker(A)) &=& \dim(\Rg(D_{B})\cap \Ker(A)) \\
&=& \rank(D_{B}) - \rank(AD_{B}) \\
&=& r - \null(D_{B}) - (r - \null(AD_{B})) \\
&=& \null(AD_{B}) - \null(D_{B}) \\
&=& \null(AD_{B}) \\
&=& s,
\end{eqnarray*}
so $\mathcal{B}$ is a basis for $\Rg(B)\cap\Ker(A)$.

\begin{theorem}
Let $V_{B}$ and $V_{BC}$ denote matrices whose columns constitute bases for $\Ker(AD_{B})$ and $\Ker(AD_{BC})$, respectively. Then equality in (1) holds if and only if
\[
D_{B}V_{B} = D_{BC}V_{BC}Z
\]
for some matrix $Z$.
\end{theorem}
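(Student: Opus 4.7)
\medskip

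\noindent\textbf{Proof proposal.} The plan is to chain together Theorem 4 with the explicit bases for the subspaces $\Rg(B)\cap\Ker(A)$ and $\Rg(BC)\cap\Ker(A)$ constructed in the paragraph immediately preceding the statement, reducing everything to a containment of column spaces that can be expressed matricially.

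First, I would invoke Theorem 4 to rewrite equality in (1) as the subspace equality
\[
\Rg(B)\cap\Ker(A) \;=\; \Rg(BC)\cap\Ker(A).
\]
By the construction before the theorem, the columns of $D_{B}V_{B}$ form a basis for the left-hand side and the columns of $D_{BC}V_{BC}$ form a basis for the right-hand side; thus equality of these subspaces is equivalent to equality of the column spaces $\Rg(D_{B}V_{B}) = \Rg(D_{BC}V_{BC})$.

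Next, I would observe that one inclusion is automatic. Since $\Rg(BC)\subseteq\Rg(B)$, we have $\Rg(BC)\cap\Ker(A)\subseteq\Rg(B)\cap\Ker(A)$, so $\Rg(D_{BC}V_{BC})\subseteq\Rg(D_{B}V_{B})$ holds unconditionally. Therefore equality of the column spaces is equivalent to the single inclusion
\[
\Rg(D_{B}V_{B}) \;\subseteq\; \Rg(D_{BC}V_{BC}).
\]
Finally, this inclusion is equivalent to every column of $D_{B}V_{B}$ being a linear combination of columns of $D_{BC}V_{BC}$, which is precisely the matrix identity $D_{B}V_{B} = D_{BC}V_{BC}Z$ for some $Z$ (whose columns record the required coefficients).

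The argument is essentially a chain of equivalences, so there is no serious obstacle; the only thing to be careful about is the bookkeeping on sizes (so that $Z$ has dimensions making $D_{BC}V_{BC}Z$ the same shape as $D_{B}V_{B}$) and the invocation of the fact proved before the theorem that $\{D_{B}v_{1},\ldots,D_{B}v_{s}\}$ really is a basis for $\Rg(B)\cap\Ker(A)$ (and analogously for $BC$), which is needed to pass from the subspace equality supplied by Theorem 4 to a statement about column spaces of the explicit matrices.
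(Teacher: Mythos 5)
Your proposal is correct and follows essentially the same route as the paper: invoke Theorem 4, identify $\Rg(D_{B}V_{B})$ and $\Rg(D_{BC}V_{BC})$ with $\Rg(B)\cap\Ker(A)$ and $\Rg(BC)\cap\Ker(A)$ via the remarks preceding the theorem, and translate the range containment into the factorization $D_{B}V_{B}=D_{BC}V_{BC}Z$. Your explicit note that the reverse inclusion is automatic (so equality reduces to a single containment) is a detail the paper leaves implicit, but the argument is the same.
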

\begin{proof}
Note that $\Rg(D_{B}V_{B}) = \Rg(B)\cap\Ker(A)$ and $\Rg(D_{BC}V_{BC}) = \Rg(BC)\cap\Ker(A)$ by the above remarks. Also, it is a general fact that for any two matrices $M$ and $N$, $\Rg(M) \subseteq \Rg(N)$ if and only if $M = NZ$ for some matrix $Z$.  Thus the result follows from Theorem 4.
\end{proof}
We now prove that equality holds in the Frobenius rank inequality (1) precisely when equation (2) holds using Theorem 3, Theorem 4, and Theorem 5. This is the main purpose of the present note.

\begin{theorem}
\[ \rank(ABC) + \rank(B) = \rank(AB) + \rank(BC) \]
if and only if there exist matrices $X$ and $Y$ such that
\[ B = BCX + YAB. \]
\end{theorem}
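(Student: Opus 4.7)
The plan is to handle the two directions separately, with the reverse implication following by direct substitution into the equation and the forward implication requiring an explicit construction of $Y$ built from the isomorphism $T\colon \Rg(B)/\Rg(BC)\to\Rg(AB)/\Rg(ABC)$ that equality in (1) furnishes.

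For the reverse direction, I would assume $B=BCX+YAB$ and show $\Rg(B)\cap\Ker(A)\subseteq\Rg(BC)\cap\Ker(A)$, at which point Theorem 4 (together with the automatic reverse containment) closes the argument. Given any $v=Bw$ with $Av=0$, note $ABw=Av=0$, so substituting into the equation gives $v=Bw=BCXw+YABw=BCXw$, placing $v$ in $\Rg(BC)\cap\Ker(A)$.

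For the forward direction, I would extend a basis $v_{1},\ldots,v_{m_{2}}$ of $\Rg(ABC)$ to a basis $v_{1},\ldots,v_{n_{2}}$ of $\Rg(AB)$; by Theorem 1 the classes $[v_{m_{2}+1}],\ldots,[v_{n_{2}}]$ form a basis of $\Rg(AB)/\Rg(ABC)$, and by Theorem 2 the map $T$ is an isomorphism, so for each $i>m_{2}$ we can lift $T^{-1}([v_{i}])$ to some $u_{i}\in\Rg(B)$, i.e., $Au_{i}-v_{i}\in\Rg(ABC)$. Define a linear map $Y$ by $Yv_{i}=0$ for $i\leq m_{2}$, $Yv_{i}=u_{i}$ for $i>m_{2}$, and extend $Y$ by zero off $\Rg(AB)$. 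For any column $b_{j}$ of $B$, write $Ab_{j}=\sum_{i}\alpha_{i}v_{i}$; then $YAb_{j}=\sum_{i>m_{2}}\alpha_{i}u_{i}$, and applying $T^{-1}$ to $[Ab_{j}]=\sum_{i>m_{2}}\alpha_{i}[v_{i}]$ gives $[b_{j}]=\sum_{i>m_{2}}\alpha_{i}[u_{i}]=[YAb_{j}]$ in $\Rg(B)/\Rg(BC)$, so $b_{j}-YAb_{j}=BCx_{j}$ for some $x_{j}$. Assembling the $x_{j}$'s into the columns of a matrix $X$ yields $B=BCX+YAB$.

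The main obstacle is the step $[b_{j}]=[YAb_{j}]$: its validity reduces to the consistency requirement that whenever $\sum\alpha_{k}(Ab_{k})=0$ one must have $\sum\alpha_{k}b_{k}\in\Rg(BC)$, and this is exactly the content of Theorem 4 (equivalently, of the factorization $D_{B}V_{B}=D_{BC}V_{BC}Z$ guaranteed by Theorem 5). The construction itself is not canonical — the basis extension of $\Rg(ABC)$ to $\Rg(AB)$ and the choice of lifts $u_{i}$ are free — which matches the algorithmic flavor advertised in the abstract: any such data produces a valid pair $(X,Y)$.
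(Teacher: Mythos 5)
Your proof is correct, but the construction in the forward direction takes a genuinely different route from the paper's. For the easy direction the two arguments coincide in substance: you show directly that any $v\in\Rg(B)\cap\Ker(A)$ satisfies $v=BCXw\in\Rg(BC)\cap\Ker(A)$ and invoke Theorem 4, while the paper encodes the identical containment in matrix form ($D_{B}V_{B}=BE=BCXE=D_{BC}V_{BC}Z$) and invokes Theorem 5. For the construction, however, the paper never uses the quotient map $T$: it works with the isomorphism $\varphi:\Rg(B)/(\Rg(B)\cap\Ker(A))\to\Rg(AB)$ of Theorem 3, chooses a basis of $\Rg(B)$ adapted to $\Rg(B)\cap\Ker(A)$, defines $Y$ as a full right inverse of $A$ on $\Rg(AB)$ (sending $AD_{B}v_{k}\mapsto D_{B}v_{k}$, annihilating nothing inside $\Rg(AB)$), and uses the set equality $\Rg(B)\cap\Ker(A)=\Rg(BC)\cap\Ker(A)$ of Theorem 4 to absorb the kernel component of $Bz$ into $BCX$. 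You instead take a basis of $\Rg(AB)$ adapted to $\Rg(ABC)$, define $Y$ to kill $\Rg(ABC)$ and to lift the remaining basis vectors through $T^{-1}$, and absorb the discrepancy $b_{j}-YAb_{j}\in\Rg(BC)$ into $X$; the equality hypothesis enters as injectivity of $T$ rather than through Theorem 4. Your version stays closer to the quotient formulation of the Introduction and, in matrix terms, amounts to inverting the lower-right block appearing in Theorem 2, whereas the paper's version is the one that drives the explicit computation in Example 1; both are equally constructive. One small remark: your closing paragraph presents the step $[b_{j}]=[YAb_{j}]$ as an outstanding obstacle requiring Theorem 4 or Theorem 5, but that step is already fully justified by the bijectivity of $T$ that you established earlier, so no further appeal is needed there.
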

\vspace{-0.5em}
\begin{proof}
First suppose that $B = BCX + YAB$ for some matrices $X$ and $Y$. There exists a matrix $E$ such that $BE = D_{B}V_{B}$ since $\Rg(D_{B}V_{B}) = \Rg(B)\cap\Ker(A)$. Thus $ABE = AD_{B}V_{B} = 0_{m\times s}$. Hence
\begin{eqnarray*}
D_{B}V_{B} &=& BE \\
&=& BCXE + YABE \\
&=& BCXE.
\end{eqnarray*}
Also $ABCXE = ABE = 0_{m\times s}$, so $\Rg(BCXE) \subset \Rg(BC)\cap\Ker(A)$ and we have
\begin{eqnarray*}
D_{B}V_{B} &=& BCXE \\
&=& D_{BC}V_{BC}Z,
\end{eqnarray*}
so equality holds in (1) by Theorem 5.

Conversely, suppose that $\Rg(B)\cap\Ker(A)=\Rg(BC)\cap\Ker(A)$. As in Theorem 3, define $\varphi:\Rg(B)/\Rg(B)\cap\Ker(A)\to\Rg(AB)$ by $\varphi([Bx])=ABx$. Extend $\mathcal{B}$ to a basis $\{D_{B}v_{1}, \ldots, D_{B}v_{s}, D_{B}v_{s+1}, \ldots, D_{B}v_{r} \}$ for $\Rg(B)$. Then $\{[D_{B}v_{s+1}], \ldots, [D_{B}v_{r}]\}$ is a basis for the quotient space $\Rg(B)/\Rg(B)\cap\Ker(A)$ and $\varphi$ produces a basis $\{AD_{B}v_{s+1}, \ldots, AD_{B}v_{r}\}$ for $\Rg(AB)$. Define a linear map $T_{Y}: \Rg(AB) \to \Rg(B)$ by $T(AD_{B}v_{k}) = D_{B}v_{k}$ for each $k \in \{s+1, \ldots, r\}$, and extend linearly to all of $\Rg(AB)$. Let $Y$ be the matrix of $T_{Y}$ with respect to these bases for $\Rg(AB)$ and $\Rg(B)$.

We want to choose $X$ such that $Bz = BCXz + YABz$ for every $z \in \mathbb{F}^{p}$. Write
\[ Bz = \sum_{j = 1}^{s}c_{j}D_{B}v_{j} + \sum_{k=s+1}^{r} d_{k}D_{B}v_{k} \]
and note that
\begin{eqnarray*}
YABz &=& YA\left(\sum_{j = 1}^{s}c_{j}D_{B}v_{j} + \sum_{k=s+1}^{r} d_{k}D_{B}v_{k}\right) \\
&=& Y\left(\sum_{j = 1}^{s}c_{j}AD_{B}v_{j} + \sum_{k=s+1}^{r} d_{k}AD_{B}v_{k}\right) \\
&=& Y\left(0 + \sum_{k=s+1}^{r} d_{k}AD_{B}v_{k}\right) \\
&=& \sum_{k=s+1}^{r} d_{k}D_{B}v_{k},
\end{eqnarray*}
since each $v_{j} \in \Ker(AD_{B})$ for $j \in \{1, \ldots, s \}$. Hence it is clear that $Bz = BCXz + YABz$ if and only if $BCXz = \sum_{j = 1}^{s}c_{j}D_{B}v_{j}$.

Now $c_{j}D_{B}v_{j} \in \Rg(BC)\cap\Ker(A)$ for each $j$ because $c_{j}D_{B}v_{j} \in \Rg(B)\cap\Ker(A)$ for each $j$, so for each $j$ there exists $\tilde{v}_{j} \in \mathbb{F}^{q}$ such that $D_{B}v_{j} = BC\tilde{v}_{j}$, and we can define the linear map $T_{M}: \Rg(B) \to \mathbb{F}^{q}$ by
\begin{align*} 
T_{M}(D_{B}v_{j}) &= \tilde{v}_{j} \hspace{3mm} \mbox{ for each $j \in \{1, \ldots, s\}$, and } \\ 
T_{M}(D_{B}v_{k}) &= 0 \hspace{3mm} \mbox{ for each $k \in \{s+1, \ldots, r\}$},
\end{align*}
and after extending $T_{M}$ linearly to $Rg(B)$ we define $M \in \mathbb{M}_{q,n}$ to be the matrix representation of $T_{M}$. Let $X = MB$, then
\begin{eqnarray*}
BCXz &=& BCMBz \\
&=& BCM\left(\sum_{j = 1}^{s}c_{j}D_{B}v_{j} + \sum_{k=s+1}^{r} d_{k}D_{B}v_{k}\right) \\
&=& BC\left(\sum_{j = 1}^{s}c_{j}MD_{B}v_{j} + \sum_{k=s+1}^{r} d_{k}MD_{B}v_{k}\right) \\
&=& BC\left(\sum_{j=1}^{s}c_{j}MD_{B}v_{j} + 0 \right) \\
&=& \sum_{j=1}^{s}c_{j}BC\tilde{v}_{j} \\
&=& \sum_{j=1}^{s}c_{j}D_{B}v_{j},
\end{eqnarray*}
so with this choice of $X$ and $Y$ we have $Bz = BCXz + YABz$ for all $z \in \mathbb{F}^{p}$, hence $B = BCX + YAB$.
\end{proof}
This proof appears to be much simpler than the one given by Tian and Styan in [1], and it is also constructive.

\begin{example}
We present an example of the procedure given in Theorem 6. Consider three matrices
\[
A = 
\begin{bmatrix}
1 & 1 \\
1 & 1 \\
0 & 0 \\
\end{bmatrix}
\hspace{3mm}
B = 
\begin{bmatrix}
1 & 2 & 3 \\
0 & 1 & 0 \\
\end{bmatrix}
\hspace{3mm}
C = 
\begin{bmatrix}
1 & 1 \\
0 & -1 \\
1 & 0 \\
\end{bmatrix}
\]
Then
\[
AB = 
\begin{bmatrix}
1 & 3 & 3 \\
1 & 3 & 3 \\
0 & 0 & 0 \\
\end{bmatrix}
\hspace{3mm}
BC =
\begin{bmatrix}
4 & -1 \\
0 & -1 \\
\end{bmatrix}
\hspace{3mm}
ABC = 
\begin{bmatrix}
4 & -2 \\
4 & -2 \\
0 & 0 \\
\end{bmatrix}
\]
So $\rank(AB) + \rank(BC) = \rank(ABC) + \rank(B) = 3$ -- equality holds in the Frobenius rank inequality. In this case we can choose
\[
D_{B} =
\begin{bmatrix}
1 & 2 \\
0 & 1 \\
\end{bmatrix}
\hspace{3mm}
\mbox{ so that }
\hspace{3mm}
AD_{B} =
\begin{bmatrix}
1 & 3 \\
1 & 3 \\
0 & 0 \\
\end{bmatrix}
\]
so $\Rg(B)\cap\Ker(A)$ is spanned by $v_{j} = (-1, 1) \in \mathbb{R}^{2}$, which can be extended to a basis $\{(-1,1), (1,0)\}$ for $\Rg(B)$. Hence $T_{Y}$ is defined by $T_{Y}(1,1,0) = (1,0)$ and we can choose
\[
Y = \begin{bmatrix}
1 & 0 & 0 \\
0 & 0 & 0 \\
\end{bmatrix}.
\]
Now we find the $\tilde{v}_{j}$ such that $D_{B}v_{j} = BC\tilde{v}_{j}$:
\begin{eqnarray*}
\tilde{v}_{j} &=& (BC)^{-1}D_{B}v_{j} \\
&=& -\frac{1}{4}
\begin{bmatrix}
-1 & 1 \\
0 & 4 \\
\end{bmatrix}
\begin{bmatrix}
1 & 2 \\
0 & 1 \\
\end{bmatrix}
\begin{bmatrix}
-1 \\
1 \\
\end{bmatrix} \\
&=&
\begin{bmatrix}
-\frac{1}{2} \\
-1
\end{bmatrix}
\end{eqnarray*}
Then $T_{M}: \Rg(B) \to \mathbb{R}^{2}$ is defined by $T_{M}(-1,1) = (-1/2, -1)$ and $T_{M}(1,0) = (0,0)$, so the matrix of $T_{M}$ is
\[ 
M = \begin{bmatrix}
0 & -1/2 \\
0 & -1 \\
\end{bmatrix}
\]
and the proof of Theorem 6 yields
\[
X = MB = 
\begin{bmatrix}
0 & -\frac{1}{2} & 0 \\
0 & -1 & 0 \\
\end{bmatrix}.
\]
Finally,
\[ 
BCX =
\begin{bmatrix}
0 & -1 & 0 \\
0 & 1 & 0 \\
\end{bmatrix}
\hspace{3mm}
\mbox{ and }
\hspace{3mm}
YAB = 
\begin{bmatrix}
1 & 3 & 3 \\
0 & 0 & 0 \\
\end{bmatrix}
\]
so that $B = BCX + YAB$ as desired.
\end{example}

 We note here that the solution $X$ and $Y$ to (2) is not unique, although this is obvious as the bases we chose in the proof of Theorem 5 were not at all canonical. Indeed, suppose that $X$ and $Y$ are matrices that solve (2) with respect to the bases $\{P_{1},P_{2}\}$ for $X$ and $\{Q_{1},Q_{2}\}$ for $Y$. Let $\{P_{1}^{'}, P_{2}^{'}\}$ be an alternative choice of bases for $X$, and let $\{Q_{1}^{'}, Q_{2}^{'}\}$ be an alternative choice of bases for $Y$. Now in order to calculate $X$ with respect to these new bases, let us denote the matrix representation of the identity transformation $\id: \mathbb{F}^{q} \to \mathbb{F}^{q}$ with respect to $P_{1}$ and $P_{1}^{'}$ by $[I]_{P_{1}}^{P_{1}^{'}}$. Similarly, denote the matrix representation of the identity transformation $\id: \mathbb{F}^{p} \to \mathbb{F}^{p}$ with respect to $P_{2}$ and $P_{2}^{'}$ by $[I]_{P_{2}}^{P_{2}^{'}}$. Then the matrix
 \[
 [X]_{P_{1}^{'}}^{P_{2}^{'}} = ([I]_{P_{2}}^{P_{2}^{'}})^{-1} X [I]_{P_{1}}^{P_{1}^{'}},
 \]
 together with the corresponding matrix for $Y$, also solves (2) with a different choice of basis. Thus, this procedure allows one to generate infinitely many pairs of solutions to the matrix equation $B = BCX + YAB$.

\bibliographystyle{unsrt}  
\bibliography{references}
\cite{Tian:2002, Roth:1951}

\end{document}